\newtheorem{theorem}{Theorem}[section]
\newtheorem{lemma}[theorem]{Lemma}
\newtheorem{corollary}[theorem]{Corollary}
\theoremstyle{definition}
\theoremstyle{remark}
\newtheorem{remark}[theorem]{Remark}
\numberwithin{equation}{section}
  \DeclareMathOperator{\spe}{sp}
  \def\etal{et al.\,}
\begin{document}
\setcounter{page}{1}

 \title[Improvements of Some Numerical radius inequalities]
{Improvements of Some Numerical radius inequalities}
\author[M.W. Alomari]{Mohammad  W. Alomari}
 
\address{Department of Mathematics, Faculty of Science and Information
	Technology, Irbid National University, 2600 Irbid 21110, Jordan.}
\email{\textcolor[rgb]{0.00,0.00,0.84}{mwomath@gmail.com}}

\date{\today}
\subjclass[2010]{Primary: 47A30,  47A12   Secondary: 15A60, 47A63.}

\keywords{Mixed Schwarz inequality, Numerical radius, Furuta inequality.}

 
\date{Received: \today }

\begin{abstract}
In this work,  we improve and refine some numerical radius inequalities. In particular, for all Hilbert space operators $T$,   the celebrated Kittaneh inequality reads:
\begin{align*}
\frac{1}{4}\left\|   T^*T +  TT^*\right\|\le w^{2 }\left(T \right)   \le \frac{1}{2}\left\|   T^*T +  TT^*\right\|.
\end{align*}
In this work we provide some important refinements for the upper bound of the Kittaned inequality. Indeed, we establish 
\begin{align*}
w^{2 }\left(T \right)   \le \frac{1}{2}\left\|   T^*T +  TT^*\right\| 
- \frac{1}{4} \mathop {\inf }\limits_{\left\| x \right\| = 1} 
\left( {\left\langle {\left| T \right|x,x} \right\rangle  - \left\langle {\left| T^* \right|x,x} \right\rangle   } \right)^2,
\end{align*}
which also refined and improved as
\begin{align*}
w^{2 }\left(T \right)   \le \frac{1}{2}\left\|   T^*T +  TT^*\right\| 
- \frac{1}{2} \mathop {\inf }\limits_{\left\| x \right\| = 1} 
\left( {\left\langle {\left| T \right|x,x} \right\rangle  - \left\langle {\left| T^* \right|x,x} \right\rangle   } \right)^2, 
\end{align*}
and
\begin{align*}
w^{2 }\left(T \right)   \le \frac{1}{2} \left\|T^*T+TT^*  \right\|  -\frac{1}{2} 
\mathop {\inf }\limits_{\left\| x \right\| = 1} \left(\left\langle {\left| T \right|^{2 }x,x} \right\rangle^{\frac{1}{2}} - \left\langle {\left| T^* \right|^{2 } x,x} \right\rangle^{\frac{1}{2}}\right)^2, 
\end{align*}

with third improvement
\begin{align*}
w^2 \left( {T  } \right)  \le \frac{1}{4  }\left\| {\left| T \right|   + \left| {T^* } \right|  } \right\|^{2}  
- \frac{1}{{4 }}\mathop {\inf }\limits_{\left\| x \right\| = 1} \left( {\left\langle {\left| T \right|  x,x} \right\rangle    - \left\langle {\left| {T^* } \right|  x,x} \right\rangle  } \right)^2.
\end{align*}
Other general related results are also considered. 
\end{abstract}

\maketitle

\section{Introduction}

Let $\mathscr{B}\left( \mathscr{H}\right) $ be the Banach algebra
of all bounded linear operators defined on a complex Hilbert space
$\left( \mathscr{H};\left\langle \cdot ,\cdot \right\rangle
\right)$  with the identity operator  $1_\mathscr{H}$ in
$\mathscr{B}\left( \mathscr{H}\right) $. A bounded linear operator
$A$ defined on $\mathscr{H}$ is selfadjoint if and only if $
\left\langle {Ax,x} \right\rangle \in \mathbb{R}$ for all $x\in
\mathscr{H}$.  Consider the real vector space
$\mathscr{B}\left( \mathscr{H}\right)_{sa}$ of self-adjoint
operators on $ \mathscr{H}$ and its positive cone
$\mathscr{B}\left( \mathscr{H}\right)^{+}$ of positive operators
on $\mathscr{H}$.   A partial order is naturally
equipped on $\mathscr{B}\left( \mathscr{H}\right)_{sa}$ by
defining $A\le B$ if and only if $B-A\in   \mathscr{B}\left(
\mathscr{H}\right)^{+}$.  We write $A
> 0$ to mean that $A$ is a strictly positive operator, or
equivalently, $A \ge 0$ and $A$ is invertible.

The Schwarz inequality for positive operators reads that if $A$ is a positive operator in $\mathscr{B}\left(\mathscr{H}\right)$, then
\begin{align}
\left| {\left\langle {Ax,y} \right\rangle} \right|  ^2  \le \left\langle {A x,x} \right\rangle \left\langle { A y,y} \right\rangle  \label{eq1.1}
\end{align}
for any   vectors $x,y\in \mathscr{H}$.

In 1951, Reid \cite{R} proved an inequality which in some senses
considered a variant of the Schwarz inequality. In fact, he proved
that for all operators $A\in \mathscr{B}\left( \mathscr{H}\right)
$ such that $A$ is positive and $AB$ is selfadjoint then
\begin{align}
\left| {\left\langle {ABx,y} \right\rangle} \right|  \le \|B\|
\left\langle {A x,x} \right\rangle, \label{eq1.2}
\end{align}
for all $x\in \mathscr{H}$. In \cite{H}, Halmos presented his
stronger version of the Reid inequality \eqref{eq1.2} by replacing
$r\left(B\right)$ instead of $\|B\|$.

In 1952, Kato  \cite{TK} introduced a companion inequality of
\eqref{eq1.1}, called  the mixed Schwarz inequality,  which
asserts
\begin{align}
\left| {\left\langle {Ax,y} \right\rangle} \right|  ^2  \le \left\langle {\left| A \right|^{2\alpha } x,x} \right\rangle \left\langle {\left| {A^* } \right|^{2\left( {1 - \alpha } \right)} y,y} \right\rangle, \qquad 0\le \alpha \le 1. \label{eq1.3}
\end{align}
for every   operators $A\in \mathscr{B}\left( \mathscr{H}\right) $ and any vectors $x,y\in \mathscr{H}$, where  $\left|A\right|=\left(A^*A\right)^{1/2}$.

In 1988,  Kittaneh  \cite{FK4} proved  a very interesting extension combining both the Halmos--Reid inequality \eqref{eq1.2} and the   mixed Schwarz inequality \eqref{eq1.3}. His result reads that
\begin{align}
\left| {\left\langle {ABx,y} \right\rangle } \right| \le r\left(B\right)\left\| {f\left( {\left| A \right|} \right)x} \right\|\left\| {g\left( {\left| {A^* } \right|} \right)y} \right\|\label{kittaneh.ineq}
\end{align}
for any   vectors $x,y\in  \mathscr{H} $, where $A,B\in \mathscr{B}\left( \mathscr{H}\right)$ such that $|A|B=B^*|A|$ and    $f,g$ are  nonnegative continuous functions  defined on $\left[0,\infty\right)$ satisfying that $f(t)g(t) =t$ $(t\ge0)$.       Clearly, choose $f(t)=t^{\alpha}$ and $g(t)=t^{1-\alpha}$ with   $B=1_{\mathscr{H}}$ we refer to \eqref{eq1.3}. Moreover, choosing $\alpha=\frac{1}{2}$ some manipulations refer to the Halmos version of the Reid inequality. The cartesian decomposition form of \eqref{kittaneh.ineq} was recently proved by the Alomari in \cite{alomari1}.

In 1994, Furuta \cite{Furuta} proved the following generalization of Kato's inequality \eqref{eq1.3} 
\begin{align}
\left| {\left\langle {T\left| T \right|^{\alpha  + \beta  - 1} x,y} \right\rangle } \right|^2  \le \left\langle {\left| T \right|^{2\alpha } x,x} \right\rangle \left\langle {\left| T \right|^{2\beta } y,y} \right\rangle \label{eq1.5}
\end{align}
for any $x, y \in \mathscr{H}$ and $\alpha,\beta\in \left[0,1\right]$ with $\alpha+\beta \ge1$.

The inequality \eqref{eq1.5} was generalized for any $\alpha, \beta \ge0$ with $\alpha+\beta \ge1$ by Dragomir in \cite{D3}. Indeed, as noted by Dragomir the condition $\alpha,\beta\in \left[0,1\right]$ was assumed by Furuta to fit with the Heinz--Kato inequality, which reads:
\begin{align*}
\left| {\left\langle {Tx,y} \right\rangle } \right| \le \left\| {A^\alpha  x} \right\|\left\| {B^{1 - \alpha } y} \right\|
\end{align*}
for any $x, y \in \mathscr{H}$ and $\alpha \in \left[0,1\right]$  where $A$ and $B$ are prositive operators such that $\left\| {Tx} \right\| \le \left\| {Ax} \right\|$ and $\left\| {T^* y} \right\| \le \left\| {By} \right\|$ for any  $x, y \in \mathscr{H}$.

For a bounded linear operator $T$ on a Hilbert space
$\mathscr{H}$, the numerical range $W\left(T\right)$ is the image
of the unit sphere of $\mathscr{H}$ under the quadratic form $x\to
\left\langle {Tx,x} \right\rangle$ associated with the operator.
More precisely,
\begin{align*}
W\left( T \right) = \left\{ {\left\langle {Tx,x} \right\rangle :x
	\in \mathscr{H},\left\| x \right\| = 1} \right\}
\end{align*}
Also, the numerical radius is defined to be
\begin{align*}
w\left( T \right) = \sup \left\{ {\left| \lambda\right|:\lambda
	\in W\left( T \right) } \right\} = \mathop {\sup }\limits_{\left\|
	x \right\| = 1} \left| {\left\langle {Tx,x} \right\rangle }
\right|.
\end{align*}

The spectral radius of an operator $T$ is defined to be
\begin{align*}
r\left( T \right) = \sup \left\{ {\left| \lambda\right|:\lambda
	\in \spe\left( T \right) } \right\}.
\end{align*}

We recall that,  the usual operator norm of an operator $T$ is
defined to be
\begin{align*}
\left\| T \right\| = \sup \left\{ {\left\| {Tx} \right\|:x \in
	H,\left\| x \right\| = 1} \right\}.
\end{align*}

It is well known that $w\left(\cdot\right)$ defines an operator
norm on $\mathscr{B}\left( \mathscr{H}\right) $ which is
equivalent to operator norm $\|\cdot\|$. Moreover, we have
\begin{align}
\frac{1}{2}\|T\|\le w\left(T\right) \le \|T\|\label{eq1.6}
\end{align}
for any $T\in \mathscr{B}\left( \mathscr{H}\right)$ and this
inequality is sharp.

In 2003, Kittaneh \cite{FK4}  refined the right-hand side of
\eqref{eq1.7}, where he proved that
\begin{align}
w\left(T\right) \le
\frac{1}{2}\left(\|T\|+\|T^2\|^{1/2}\right)\label{eq1.7}
\end{align}
for any  $T\in \mathscr{B}\left( \mathscr{H}\right)$.

After that in 2005, the same author in \cite{FK2} proved that
\begin{align}
\frac{1}{4}\|A^*A+AA^*\|\le  w^2\left(A\right) \le
\frac{1}{2}\|A^*A+AA^*\|.\label{eq1.8}
\end{align}
The inequality is sharp.

In 2007, Yamazaki \cite{Y} improved \eqref{eq1.8} by proving that
\begin{align*}
w\left( T \right) \le \frac{1}{2}\left( {\left\| T \right\| +
	w\left( {\widetilde{T}} \right)} \right) \le \frac{1}{2}\left(
{\left\| T \right\| + \left\| {T^2 } \right\|^{1/2} }
\right)\label{eq1.10}
\end{align*}
where $\widetilde{T}=|T|^{1/2}U|T|^{1/2}$ and $U$ is the unitary operator in the polar decomposition $T$ of the form $T=U\left|T\right|$.

In 2008, Dragomir \cite{D2} used Buzano inequality to improve
\eqref{eq1.1}, where he proved that
\begin{align*}
w^2\left( T \right) \le \frac{1}{2}\left( {\left\| T \right\| +
	w\left( {T^2} \right)} \right) 
\end{align*}
This result was also recently generalized by Sattari \etal in
\cite{SMY} and Alomari in \cite{alomari2}. For more recent results about the numerical radius see the recent monograph study \cite{D1}.

In this work,  we improve and refine some numerical radius inequalities. In particular, for all Hilbert space operators $T$,   the celebrated Kittaneh inequality reads:
\begin{align*}
\frac{1}{4}\left\|   T^*T +  TT^*\right\|\le w^{2 }\left(T \right)   \le \frac{1}{2}\left\|   T^*T +  TT^*\right\|.
\end{align*}
In this work we provide some important refinements for the upper bound of the Kittaned inequality. Indeed, we establish 
\begin{align*}
w^{2 }\left(T \right)   \le \frac{1}{2}\left\|   T^*T +  TT^*\right\| 
- \frac{1}{4} \mathop {\inf }\limits_{\left\| x \right\| = 1} 
\left( {\left\langle {\left| T \right|x,x} \right\rangle  - \left\langle {\left| T^* \right|x,x} \right\rangle   } \right)^2,
\end{align*}
which also refined and improved as
\begin{align*}
w^{2 }\left(T \right)   \le \frac{1}{2}\left\|   T^*T +  TT^*\right\| 
- \frac{1}{2} \mathop {\inf }\limits_{\left\| x \right\| = 1} 
\left( {\left\langle {\left| T \right|x,x} \right\rangle  - \left\langle {\left| T^* \right|x,x} \right\rangle   } \right)^2, 
\end{align*}
and
\begin{align*}
w^{2 }\left(T \right)   \le \frac{1}{2} \left\|T^*T+TT^*  \right\|  -\frac{1}{2} 
\mathop {\inf }\limits_{\left\| x \right\| = 1} \left(\left\langle {\left| T \right|^{2 }x,x} \right\rangle^{\frac{1}{2}} - \left\langle {\left| T^* \right|^{2 } x,x} \right\rangle^{\frac{1}{2}}\right)^2, 
\end{align*}

with third improvement
\begin{align*}
w^2 \left( {T  } \right)  \le \frac{1}{4  }\left\| {\left| T \right|   + \left| {T^* } \right|  } \right\|^{2}  
- \frac{1}{{4 }}\mathop {\inf }\limits_{\left\| x \right\| = 1} \left( {\left\langle {\left| T \right|  x,x} \right\rangle    - \left\langle {\left| {T^* } \right|  x,x} \right\rangle  } \right)^2.
\end{align*}
Other general related results are also considered.

\section{Numerical Radius Inequalities}\label{sec2}

 In order to prove our main result we need to the following Lemmas:
\begin{lemma}\label{lemma2.1} 
Let $S\in \mathscr{B}\left(\mathscr{H}\right)$, $S\geq 0$ and $x\in \mathscr{H}$ be a unit vector. Then, the operator Jensen's inequality 
\begin{align}
\langle Sx, x \rangle^r \leq \langle S^rx, x \rangle,\qquad   r\ge1 
\end{align}
and 
\begin{align}
\langle S^rx, x \rangle \leq \langle Sx, x \rangle^r, \qquad   r \in \left[0,1\right].\label{eq2.2}
\end{align}
\end{lemma}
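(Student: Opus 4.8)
The plan is to reduce both inequalities to the scalar Jensen inequality by invoking the spectral theorem for the positive operator $S$. Since $S\ge0$, its spectrum $\sigma(S)$ is contained in $[0,\infty)$, and there is a spectral (projection-valued) measure $E(\cdot)$ such that $S=\int_{\sigma(S)} t\,dE(t)$ and, by the continuous functional calculus, $S^r=\int_{\sigma(S)} t^r\,dE(t)$ for every $r\ge0$. The whole point is that conjugating against a fixed unit vector turns these operator integrals into honest scalar integrals against a probability measure.

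First I would fix the unit vector $x$ and introduce the positive Borel measure $\mu$ on $\sigma(S)$ defined by $\mu(B)=\left\langle E(B)x,x\right\rangle$. Because $E$ is a projection-valued measure with $E\left(\sigma(S)\right)=1_{\mathscr H}$ and $\left\|x\right\|=1$, one has $\mu\left(\sigma(S)\right)=\left\langle x,x\right\rangle=1$, so $\mu$ is a probability measure. With this notation the two quadratic forms become ordinary integrals,
\[
\left\langle Sx,x\right\rangle=\int_{\sigma(S)} t\,d\mu(t),\qquad \left\langle S^rx,x\right\rangle=\int_{\sigma(S)} t^r\,d\mu(t).
\]

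The heart of the argument is then the scalar Jensen inequality applied to $\varphi(t)=t^r$ on $[0,\infty)$. For $r\ge1$ the map $\varphi$ is convex, so $\varphi\!\left(\int t\,d\mu\right)\le\int\varphi(t)\,d\mu$, which is precisely $\left\langle Sx,x\right\rangle^r\le\left\langle S^rx,x\right\rangle$. For $r\in[0,1]$ the map $\varphi$ is concave, the inequality reverses, and we obtain $\left\langle S^rx,x\right\rangle\le\left\langle Sx,x\right\rangle^r$, which is exactly \eqref{eq2.2}.

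I do not anticipate a genuine obstacle here, since the statement is a classical fact (a form of the Hölder--McCarthy inequality); the only points that demand care are verifying that $\mu$ has total mass $1$, which is precisely where the hypothesis $\left\|x\right\|=1$ enters, and correctly tracking the convexity versus concavity of $t\mapsto t^r$ across the threshold $r=1$, since it is this change of sign in the second derivative that flips the direction of the inequality.
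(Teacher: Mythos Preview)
Your argument is correct and is the standard derivation of the H\"older--McCarthy inequality: pass to the spectral measure of the positive operator $S$, compress against the unit vector $x$ to obtain a probability measure $\mu$, and then apply the scalar Jensen inequality to $t\mapsto t^r$, convex for $r\ge1$ and concave for $r\in[0,1]$. The hypothesis $\|x\|=1$ is used exactly where you say, to ensure $\mu$ has total mass $1$.

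As for comparison with the paper: the paper does not supply a proof of this lemma at all. Lemma~\ref{lemma2.1} is stated as a known preliminary (``the operator Jensen's inequality'') and then invoked in the proofs of Theorems~\ref{thm2.4}, \ref{thm2.7}, \ref{thm2.5}, and \ref{thm2.15}, in the same way that Lemmas~\ref{lemma2.2} and \ref{lemma2.3} are quoted from \cite{KM} and \cite{SMS} without proof. So there is nothing to compare against; your spectral-theorem argument is precisely the classical justification one would expect to see if the paper had chosen to include one.
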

 
 Kittaneh and  Manasrah \cite{KM} obtained the following result which is a refinement of the scalar Young  inequality. 
 \begin{lemma}\label{lemma2.2}
 	Let $a, b\geq 0,$  and $p,q>1$ such that $\frac{1}{p}+\frac{1}{q}=1.$ Then 
 \begin{align}ab+\mbox{min}\left\{ \frac{1}{p},\frac{1}{q} \right\}(a^{\frac{p}{2}}-b^{\frac{q}{2}})^2 \leq \frac{a^p}{p}+\frac{b^q}{q}. \label{eq2.3}
\end{align}
 \end{lemma}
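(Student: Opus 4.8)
The plan is to reduce the two–variable inequality \eqref{eq2.3} to a single–variable statement governed by Bernoulli's inequality (equivalently, the concavity of $t\mapsto t^s$ for $0\le s\le 1$). First I would exploit the symmetry of \eqref{eq2.3}: interchanging the pairs $(a,p)$ and $(b,q)$ swaps the two summands on the right, leaves $ab$ unchanged, replaces $(a^{p/2}-b^{q/2})^2$ by the identical quantity $(b^{q/2}-a^{p/2})^2$, and merely reorders the two entries of the minimum. Hence the inequality is invariant under this swap, and it costs nothing to assume $\tfrac1p\le\tfrac1q$, so that $\min\{\tfrac1p,\tfrac1q\}=\tfrac1p=:\mu\le\tfrac12$.

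Next I would pass to the natural variables $A:=a^p$ and $B:=b^q$. With $\nu:=\tfrac1q=1-\mu$ one has $ab=A^{\mu}B^{\nu}$, $a^{p/2}=A^{1/2}$, $b^{q/2}=B^{1/2}$, and the right–hand side becomes $\mu A+\nu B$. Thus \eqref{eq2.3} is equivalent to the refined weighted arithmetic–geometric mean inequality
\begin{align*}
A^{\mu}B^{1-\mu}+\mu\left(A^{1/2}-B^{1/2}\right)^2\le \mu A+(1-\mu)B.
\end{align*}
The case $B=0$ is immediate, so I may assume $B>0$ and divide through by $B$. Setting $t:=\sqrt{A/B}\ge 0$, so that $A^{\mu}B^{1-\mu}/B=(A/B)^{\mu}=t^{2\mu}$, collapses everything to the one–variable form
\begin{align*}
1+2\mu\left(t-1\right)\ge t^{2\mu},\qquad t\ge 0.
\end{align*}

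Finally, since $0\le 2\mu\le 1$ by our normalization, the map $t\mapsto t^{2\mu}$ is concave on $[0,\infty)$ and therefore lies below its tangent line at $t=1$, which is exactly $1+2\mu(t-1)$; this is Bernoulli's inequality for exponents in $[0,1]$, and it closes the argument. Tracing the substitutions back recovers \eqref{eq2.3} in full, with equality precisely when $t=1$, i.e.\ $a^p=b^q$. The only place requiring care — and the step I would regard as the crux — is the algebraic bookkeeping in the reduction: one must verify that the cross terms cancel so that the residual $\mu A+\nu B-A^{\mu}B^{1-\mu}-\mu(A^{1/2}-B^{1/2})^2$ simplifies to $B\bigl(1+2\mu(t-1)-t^{2\mu}\bigr)$, and one must confirm that the symmetry reduction legitimately pins down which of $\tfrac1p,\tfrac1q$ realizes the minimum. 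Once these are in place, the analytic content is nothing more than the elementary concavity estimate.
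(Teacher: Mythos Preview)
Your argument is correct: the symmetry reduction to $\mu=\tfrac1p\le\tfrac12$, the substitution $A=a^p$, $B=b^q$, $t=\sqrt{A/B}$, and the final appeal to the concavity of $t\mapsto t^{2\mu}$ (Bernoulli) are all sound, and the algebraic check that the residual equals $B\bigl(1+2\mu(t-1)-t^{2\mu}\bigr)$ goes through exactly as you indicate.

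As for comparison with the paper: there is nothing to compare against. The paper does not prove Lemma~\ref{lemma2.2}; it merely quotes it from Kittaneh--Manasrah \cite{KM} as a known refinement of Young's inequality. Your proof is in fact essentially the standard one given in that reference (reduce to the weighted AM--GM refinement $A^{\mu}B^{1-\mu}+\mu(\sqrt{A}-\sqrt{B})^2\le\mu A+(1-\mu)B$ and invoke Bernoulli), so you have supplied the argument the paper omitted by citation.
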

 Recently, Sheikhhosseini {\it et al.} \cite{SMS} have obtained the following  generalization of  \eqref{eq2.3}. 
 \begin{lemma}\label{lemma2.3}
 	If $a,b>0$, and $p,q>1$ such that $\frac{1}{p}+\frac{1}{q}=1,$ then for $m=1,2,3,\dots,$
 	\begin{align}
 	(a^{\frac{1}{p}}b^{\frac{1}{q}})^m+r_0^m(a^{\frac{m}{2}}-b^{\frac{m}{2}})^2 \leq \left(\frac{a^r}{p}+\frac{b^r}{q}\right)^{\frac{m}{r}},~r\geq 1,\label{eq2.4}
 	\end{align}
 	 where $ r_0=\mbox{min}\left\{ \frac{1}{p},\frac{1}{q} \right\}$. In particular, if $p=q=2$, then 
 	$$(a^{\frac{1}{2}}b^{\frac{1}{2}})^m+\frac{1}{2^m}(a^{\frac{m}{2}}-b^{\frac{m}{2}})^2 \leq 2^{\frac{-m}{r}} \left(a^r+b^r\right)^{\frac{m}{r}}.$$ 
 	For $m=1$
 	$$(a^{\frac{1}{2}}b^{\frac{1}{2}})+\frac{1}{2}(a^{\frac{1}{2}}-b^{\frac{1}{2}})^2 \leq 2^{\frac{-1}{r}} \left(a^r+b^r\right)^{\frac{1}{r}}.$$ 
 	
 \end{lemma}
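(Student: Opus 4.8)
The plan is to strip off the exponent $r$ by a monotonicity argument and then reduce the remaining inequality to the weighted arithmetic--geometric mean inequality applied to a suitably regrouped binomial expansion. Write $\alpha=\tfrac1p$ and $\beta=\tfrac1q$, so $\alpha+\beta=1$ and $r_0=\min\{\alpha,\beta\}$; interchanging $a$ and $b$ if necessary (the statement is symmetric under $a\leftrightarrow b$, $p\leftrightarrow q$), I may assume $\alpha\le\beta$, so that $r_0=\alpha$. Since $\alpha+\beta=1$, the quantity $\left(\tfrac{a^r}{p}+\tfrac{b^r}{q}\right)^{1/r}=(\alpha a^r+\beta b^r)^{1/r}$ is a weighted power mean of $a,b$, and power means are nondecreasing in the exponent (the classical power--mean inequality, itself a consequence of Jensen's inequality). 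Hence for every $r\ge1$,
\begin{align*}
\left(\frac{a^r}{p}+\frac{b^r}{q}\right)^{m/r}\ge\left(\frac ap+\frac bq\right)^m=(\alpha a+\beta b)^m,
\end{align*}
so it suffices to establish the case $r=1$, which I denote by $(\star)$:
\begin{align*}
(a^{\alpha}b^{\beta})^m+r_0^m\left(a^{m/2}-b^{m/2}\right)^2\le(\alpha a+\beta b)^m.
\end{align*}

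To prove $(\star)$ I would expand the right-hand side by the binomial theorem, $(\alpha a+\beta b)^m=\sum_{k=0}^m\binom mk\alpha^k\beta^{m-k}a^kb^{m-k}$, and rewrite the correction term as $r_0^m\left(a^{m/2}-b^{m/2}\right)^2=\alpha^m\left(a^m+b^m-2a^{m/2}b^{m/2}\right)$. The $k=m$ term of the expansion is exactly $\alpha^m a^m$ and cancels; using $\beta^m-\alpha^m\ge0$, the quantity $(\alpha a+\beta b)^m-r_0^m\left(a^{m/2}-b^{m/2}\right)^2$ is then a sum of nonnegative terms,
\begin{align*}
(\beta^m-\alpha^m)\,b^m+\sum_{k=1}^{m-1}\binom mk\alpha^k\beta^{m-k}a^kb^{m-k}+2\alpha^m\,a^{m/2}b^{m/2}.
\end{align*}
Its coefficients sum to $(\beta^m-\alpha^m)+(1-\alpha^m-\beta^m)+2\alpha^m=1$, so the weighted AM--GM inequality bounds the sum below by the associated weighted geometric mean. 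Using $\sum_{k=0}^m k\binom mk\alpha^k\beta^{m-k}=m\alpha$, the total exponent of $a$ in that geometric mean is $\sum_{k=1}^{m-1}k\binom mk\alpha^k\beta^{m-k}+\tfrac m2\cdot2\alpha^m=m\alpha$, and symmetrically the exponent of $b$ is $m\beta$. The geometric mean is therefore exactly $a^{m\alpha}b^{m\beta}=(a^{\alpha}b^{\beta})^m$, which is precisely the remaining term of $(\star)$; this proves $(\star)$ and hence the lemma. (As a sanity check, for $m=1$ this regrouping reduces to $(\beta-\alpha)b+2\alpha\sqrt{ab}\ge a^{\alpha}b^{\beta}$, which is the refined Young inequality of Lemma \ref{lemma2.2}.)

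The decisive --- and only nonroutine --- step is finding this exact regrouping. The tempting alternatives fail: factoring $(\alpha a+\beta b)^m-(a^\alpha b^\beta)^m$ through $(\alpha a+\beta b)-a^\alpha b^\beta$ and invoking Lemma \ref{lemma2.2}, or inducting on $m$ by multiplying the inductive hypothesis by $\alpha a+\beta b$, both discard too much and already break at $m=2$ when $p\ne q$. What makes the AM--GM argument close is the simultaneous bookkeeping that the residual nonnegative coefficients sum to $1$ and that their weighted geometric mean lands exactly on $a^{m\alpha}b^{m\beta}$; establishing these two facts (the weight sum together with the two exponent identities) is the heart of the matter, the rest being the power--mean reduction and a routine expansion. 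Finally, equality in AM--GM forces all the grouped monomials to agree, which reproduces the known sharpness of $(\star)$ at $p=q=2$.
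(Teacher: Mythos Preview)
The paper does not actually prove Lemma~\ref{lemma2.3}; it is quoted as a result of Sheikhhosseini, Moslehian and Shebrawi \cite{SMS} and used as a black box in the subsequent theorems. So there is no ``paper's own proof'' to compare against.

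That said, your argument is correct. The reduction from general $r\ge1$ to $r=1$ via monotonicity of power means is clean and legitimate. For the core inequality $(\star)$, your regrouping works exactly as claimed: after cancelling the $\alpha^m a^m$ term and absorbing $\alpha^m b^m$ into the $k=0$ term, the residual coefficients $(\beta^m-\alpha^m)$, $\binom{m}{k}\alpha^k\beta^{m-k}$ for $1\le k\le m-1$, and $2\alpha^m$ are all nonnegative (since $\alpha\le\beta$) and sum to $1$, and the two exponent identities you quote, namely $\sum_{k=0}^m k\binom{m}{k}\alpha^k\beta^{m-k}=m\alpha$ and its symmetric counterpart, land the weighted geometric mean precisely on $a^{m\alpha}b^{m\beta}$. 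I checked the bookkeeping; it closes. The degenerate case $\alpha=\beta=\tfrac12$ (where the first weight vanishes) causes no trouble for AM--GM. Your self-contained argument is in fact a nice alternative to citing \cite{SMS}.
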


In what follows, we establish some numerical radius inequalities by providing some refinements of well-known numerical radius inequalities. Let us begin with the following result.

\begin{theorem}
\label{thm2.4}Let $T\in \mathscr(B)\left(\mathscr{H}\right)$, $\alpha, \beta \ge 0$ such that $\alpha+ \beta \ge1$. Then
\begin{align}
\label{eq2.5}w^m \left( {T\left| T \right|^{\alpha  + \beta  - 1} } \right) &\le \frac{1}{2 ^{\frac{m}{r}}  }\left\| {\left| T \right|^{2r\alpha }  + \left| {T^* } \right|^{2r\beta } } \right\|^{\frac{m}{r}}  
\\
&\qquad- \frac{1}{{2^m }}\mathop {\inf }\limits_{\left\| x \right\| = 1} \left( {\left\langle {\left| T \right|^{2\alpha } x,x} \right\rangle ^{\frac{m}{2}}  - \left\langle {\left| {T^* } \right|^{2\beta } x,x} \right\rangle ^{\frac{m}{2}} } \right)^2 \nonumber
\end{align}
\end{theorem}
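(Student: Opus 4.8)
The plan is to combine the Furuta mixed-Schwarz inequality \eqref{eq1.5} with the refined Young inequality of Sheikhhosseini \etal (Lemma~\ref{lemma2.3}), then take a supremum over unit vectors to pass from the quadratic form to the numerical radius. The starting point is that for a unit vector $x$,
\begin{align*}
\left| {\left\langle {T\left| T \right|^{\alpha + \beta - 1} x,x} \right\rangle } \right|^2 \le \left\langle {\left| T \right|^{2\alpha } x,x} \right\rangle \left\langle {\left| {T^* } \right|^{2\beta } x,x} \right\rangle,
\end{align*}
which is \eqref{eq1.5} with $y = x$ (using $|T^*|$ in place of the second $|T|$, via the standard $|T|^{2\beta}$–$|T^*|^{2\beta}$ symmetry in the mixed version). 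Raising to the power $m/2$ gives $\left| {\left\langle {T\left| T \right|^{\alpha + \beta - 1} x,x} \right\rangle } \right|^m \le \left( a b \right)^{m/2}$ where I set $a = \left\langle {\left| T \right|^{2\alpha } x,x} \right\rangle$ and $b = \left\langle {\left| {T^* } \right|^{2\beta } x,x} \right\rangle$.

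Next I would apply Lemma~\ref{lemma2.3} with $p = q = 2$, for which $(a^{1/2}b^{1/2})^m + \frac{1}{2^m}(a^{m/2} - b^{m/2})^2 \le 2^{-m/r}(a^r + b^r)^{m/r}$. Rearranging yields
\begin{align*}
\left| {\left\langle {T\left| T \right|^{\alpha + \beta - 1} x,x} \right\rangle } \right|^m \le \frac{1}{2^{m/r}} \left( a^r + b^r \right)^{m/r} - \frac{1}{2^m} \left( a^{m/2} - b^{m/2} \right)^2.
\end{align*}
The term $\left( a^{m/2} - b^{m/2} \right)^2 = \left( \left\langle {\left| T \right|^{2\alpha } x,x} \right\rangle^{m/2} - \left\langle {\left| {T^* } \right|^{2\beta } x,x} \right\rangle^{m/2} \right)^2$ is exactly the infimand in \eqref{eq2.5}. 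For the first term I would invoke Lemma~\ref{lemma2.1}: since $r \ge 1$ and $|T|^{2\alpha}, |T^*|^{2\beta} \ge 0$, the operator Jensen inequality gives $a^r = \left\langle {\left| T \right|^{2\alpha } x,x} \right\rangle^r \le \left\langle {\left| T \right|^{2r\alpha } x,x} \right\rangle$ and likewise $b^r \le \left\langle {\left| {T^* } \right|^{2r\beta } x,x} \right\rangle$, so $a^r + b^r \le \left\langle {\left( \left| T \right|^{2r\alpha } + \left| {T^* } \right|^{2r\beta } \right) x,x} \right\rangle \le \left\| {\left| T \right|^{2r\alpha } + \left| {T^* } \right|^{2r\beta } } \right\|$.

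Finally I would take the supremum over all unit vectors $x$. On the left this produces $w^m\left( {T\left| T \right|^{\alpha + \beta - 1} } \right)$. On the right the bound $\frac{1}{2^{m/r}}\left\| {\left| T \right|^{2r\alpha } + \left| {T^* } \right|^{2r\beta } } \right\|^{m/r}$ is a constant, while the subtracted term must be handled carefully: one has $\sup_x \left( F(x) - G(x) \right) \le \sup_x F(x) - \inf_x G(x)$, which converts the $\sup$ of the negative correction into the $-\frac{1}{2^m}\inf_x(\cdots)^2$ appearing in \eqref{eq2.5}. \textbf{The main obstacle} is precisely this last step: the subtracted correction term depends on $x$, so one cannot simply bound each summand independently and must justify the $\sup$–$\inf$ split (equivalently, recognize that $\frac{1}{2^{m/r}}\|\cdots\|^{m/r} - \frac{1}{2^m}\inf_x(\cdots)^2$ dominates $F(x)-G(x)$ uniformly in $x$ because the first constant already majorizes $F(x)$ pointwise). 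A secondary technical point is confirming the exponent bookkeeping, namely that applying Jensen \emph{after} raising to the $r$-th power (rather than before) is what makes the norm appear with exponents $2r\alpha$ and $2r\beta$ and the outer power $m/r$ consistent across both terms.
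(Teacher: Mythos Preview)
Your proposal is correct and follows essentially the same route as the paper: apply Furuta's inequality \eqref{eq1.5} with $y=x$, raise to the $m$-th power, invoke Lemma~\ref{lemma2.3} with $p=q=2$, then use the operator Jensen inequality (Lemma~\ref{lemma2.1}) to push the $r$-th powers inside the inner products, and finally take the supremum over unit vectors. Your explicit discussion of the $\sup$--$\inf$ split is a welcome clarification of a step the paper leaves implicit.
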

\begin{proof}
Let $y=x$ in \eqref{eq1.5}, then for all $m\ge1$ we have
\begin{align*}
\left| {\left\langle {T\left| T \right|^{\alpha  + \beta  - 1} x,x} \right\rangle } \right|^m  &\le \left\langle {\left| T \right|^{2\alpha } x,x} \right\rangle ^{\frac{m}{2}} \left\langle {\left| {T^* } \right|^{2\beta } x,x} \right\rangle ^{\frac{m}{2}}  
\\ 
&\le \left( {\frac{{\left\langle {\left| T \right|^{2\alpha } x,x} \right\rangle ^r  + \left\langle {\left| {T^* } \right|^{2\beta } x,x} \right\rangle ^r }}{2}} \right)^{\frac{m}{r}} \qquad (\text{by \eqref{eq2.4}})
\\
&\qquad - \frac{1}{{2^m }}\left( {\left\langle {\left| T \right|^{2\alpha } x,x} \right\rangle ^{\frac{m}{2}}  - \left\langle {\left| {T^* } \right|^{2\beta } x,x} \right\rangle ^{\frac{m}{2}} } \right)^2  
\\ 
&\le \left( {\frac{{\left\langle {\left| T \right|^{2r\alpha } x,x} \right\rangle  + \left\langle {\left| {T^* } \right|^{2r\beta } x,x} \right\rangle }}{2}} \right)^{\frac{m}{r}}  \qquad (\text{by (2.1)})
\\
&\qquad - \frac{1}{{2^m }}\left( {\left\langle {\left| T \right|^{2\alpha } x,x} \right\rangle ^{\frac{m}{2}}  - \left\langle {\left| {T^* } \right|^{2\beta } x,x} \right\rangle ^{\frac{m}{2}} } \right)^2  
\end{align*}	
Taking the supremum over all unit vector $x\in \mathscr{H}$ we get the desiredd result.
\end{proof}	 

 \begin{corollary}
\label{cor2.5} 	Let $T\in \mathscr(B)\left(\mathscr{H}\right)$, $\alpha, \beta \ge 0$ such that $\alpha+ \beta \ge1$. Then
 	\begin{align}
 \label{eq2.6}	w^2 \left( {T\left| T \right|^{\alpha  + \beta  - 1} } \right) &\le \frac{1}{2 ^{\frac{2}{r}}  }\left\| {\left| T \right|^{2r\alpha }  + \left| {T^* } \right|^{2r\beta } } \right\|^{\frac{2}{r}}  
 	\\
 	&\qquad- \frac{1}{{4 }}\mathop {\inf }\limits_{\left\| x \right\| = 1} \left( {\left\langle {\left| T \right|^{2\alpha } x,x} \right\rangle    - \left\langle {\left| {T^* } \right|^{2\beta } x,x} \right\rangle  } \right)^2 \nonumber
 	\end{align}
 \end{corollary}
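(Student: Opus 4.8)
The plan is to obtain Corollary~\ref{cor2.5} as the immediate specialization of Theorem~\ref{thm2.4} to the exponent $m=2$. Since the hypotheses on $T$, $\alpha$, $\beta$ are identical to those of the theorem, and since $m=2$ satisfies the standing requirement $m\ge 1$ used in the proof of Theorem~\ref{thm2.4}, the inequality~\eqref{eq2.5} applies verbatim. I would therefore simply substitute $m=2$ into~\eqref{eq2.5} and then simplify each piece.

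The simplification is where the statement takes its clean form. Setting $m=2$ gives $w^m=w^2$ on the left-hand side. On the right, the leading term $\tfrac{1}{2^{m/r}}\||T|^{2r\alpha}+|T^*|^{2r\beta}\|^{m/r}$ becomes $\tfrac{1}{2^{2/r}}\||T|^{2r\alpha}+|T^*|^{2r\beta}\|^{2/r}$, and the coefficient $\tfrac{1}{2^m}$ in front of the infimum collapses to $\tfrac14$. The decisive simplification is in the bracketed difference: with $m=2$ the exponents $\tfrac{m}{2}$ both equal $1$, so the half-powers disappear and the term
\[
\left(\left\langle{\left|T\right|^{2\alpha}x,x}\right\rangle^{\frac{m}{2}}-\left\langle{\left|T^*\right|^{2\beta}x,x}\right\rangle^{\frac{m}{2}}\right)^2
\]
reduces to $\left(\langle|T|^{2\alpha}x,x\rangle-\langle|T^*|^{2\beta}x,x\rangle\right)^2$. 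Taking the infimum over unit vectors then yields exactly~\eqref{eq2.6}.

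There is essentially no genuine obstacle here: the corollary is a direct consequence of the theorem, and the only point worth confirming is that the value $m=2$ lies in the admissible range so that every inequality invoked in the proof of Theorem~\ref{thm2.4} (namely the Furuta inequality~\eqref{eq1.5} raised to the power $m$, Lemma~\ref{lemma2.3} via~\eqref{eq2.4}, and the operator Jensen inequality~(2.1)) remains valid. Since $m=2\ge 1$ and $r\ge 1$ throughout, all of these hold, and no separate argument is needed. I expect the entire proof to consist of the single sentence that~\eqref{eq2.6} follows from~\eqref{eq2.5} by choosing $m=2$.
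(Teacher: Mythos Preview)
Your proposal is correct and follows the paper's approach of specializing Theorem~\ref{thm2.4}. The paper's one-line proof reads ``Setting $m=1$ in \eqref{eq2.5}'', but this is evidently a typo: substituting $m=1$ would give $w^1$ on the left, a coefficient $\tfrac{1}{2}$ rather than $\tfrac{1}{4}$, and square-root exponents inside the infimum, none of which match \eqref{eq2.6}; your choice $m=2$ is the correct substitution that actually yields the stated corollary.
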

\begin{proof}
Setting $m=1$ in \eqref{eq2.5} we get the desired result.
\end{proof}

\begin{remark}
Setting $r=1$ in \eqref{eq2.6}, we get
 	\begin{align*}
w^2 \left( {T\left| T \right|^{\alpha  + \beta  - 1} } \right) &\le \frac{1}{4  }\left\| {\left| T \right|^{2 \alpha }  + \left| {T^* } \right|^{2 \beta } } \right\|^{2}  
\\
&\qquad- \frac{1}{{4 }}\mathop {\inf }\limits_{\left\| x \right\| = 1} \left( {\left\langle {\left| T \right|^{2\alpha } x,x} \right\rangle    - \left\langle {\left| {T^* } \right|^{2\beta } x,x} \right\rangle  } \right)^2 \nonumber
\end{align*}
for all $\alpha, \beta \ge 0$ such that $\alpha+ \beta \ge1$.

Choosing $\alpha=\beta=\frac{1}{2}$ we get
 	\begin{align*}
w^2 \left( {T  } \right) &\le \frac{1}{4  }\left\| {\left| T \right|   + \left| {T^* } \right|  } \right\|^{2}  
 - \frac{1}{{4 }}\mathop {\inf }\limits_{\left\| x \right\| = 1} \left( {\left\langle {\left| T \right|  x,x} \right\rangle    - \left\langle {\left| {T^* } \right|  x,x} \right\rangle  } \right)^2.
\end{align*}
However, if one choose $\alpha=\beta=1$, we get
 	\begin{align*}
w^2 \left( {T\left| T \right| } \right) &\le \frac{1}{4  }\left\| {\left| T \right|^{2  }  + \left| {T^* } \right|^{2  } } \right\|^{2}  
\\
&\qquad- \frac{1}{{4 }}\mathop {\inf }\limits_{\left\| x \right\| = 1} \left( {\left\langle {\left| T \right|^{2  } x,x} \right\rangle    - \left\langle {\left| {T^* } \right|^{2  } x,x} \right\rangle  } \right)^2 \nonumber
\end{align*}
or it can be written as 	
\begin{align*}
w^2 \left( {T\left| T \right| } \right)  \le \frac{1}{4  }\left\| {T^*T+ TT^* } \right\|^{2}  
 - \frac{1}{{4 }}\mathop {\inf }\limits_{\left\| x \right\| = 1}  \left\langle {\left[ T^*T- TT^* \right] x,x} \right\rangle^2 \nonumber
\end{align*}
\end{remark}.

A generalization of the above results could be embodied as follows: 
\begin{theorem}
\label{thm2.7} 	Let $T\in \mathscr(B)\left(\mathscr{H}\right)$, $\alpha, \beta \ge 0$ such that $\alpha+ \beta \ge1$. Then
\begin{align}
w^{2s}\left( T\left| T \right|^{\alpha  + \beta  - 1} \right) &\le 2^{ - \frac{2}{r}} \left\| { \left| T \right|^{2rs\alpha}+\left| T^* \right|^{2rs\beta}   } \right\|^{\frac{2}{r}}  \nonumber 
\\ 
&\qquad-  \frac{1}{4}\mathop {\inf }\limits_{\left\| x \right\| = 1} \left[ {\left\langle {\left| T \right|^{2sr\alpha}x,x} \right\rangle  - \left\langle {\left| T^* \right|^{2rs\beta}y,y} \right\rangle } \right] \label{eq2.7}	
\end{align}	
for all $r,s\ge1$.
\end{theorem}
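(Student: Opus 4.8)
The plan is to mirror the proof of Theorem \ref{thm2.4}, but to introduce the extra exponent $s$ at the outset and then apply the operator Jensen inequality of Lemma \ref{lemma2.1} \emph{twice} — once to absorb $s$, once to absorb $r$ — with the refined Young inequality of Lemma \ref{lemma2.3} (taken at $p=q=2$, $m=2$) inserted between the two Jensen steps. First I would set $y=x$ in Furuta's inequality \eqref{eq1.5} and raise both sides to the power $s\ge1$, which is legitimate since $t\mapsto t^s$ is nondecreasing on $[0,\infty)$, to get
\[
\left| {\left\langle {T\left| T \right|^{\alpha+\beta-1}x,x} \right\rangle } \right|^{2s} \le \left\langle {\left| T \right|^{2\alpha}x,x} \right\rangle^{s}\left\langle {\left| {T^*} \right|^{2\beta}x,x} \right\rangle^{s}.
\]
Applying Lemma \ref{lemma2.1} with $S=|T|^{2\alpha}$ and $S=|T^*|^{2\beta}$ at exponent $s\ge1$ turns the two $s$-th powers into $\langle|T|^{2s\alpha}x,x\rangle$ and $\langle|T^*|^{2s\beta}x,x\rangle$; abbreviating $a:=\langle|T|^{2s\alpha}x,x\rangle$ and $b:=\langle|T^*|^{2s\beta}x,x\rangle$, the right-hand side becomes exactly $ab$.

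Next I would invoke Lemma \ref{lemma2.3} in the special case $p=q=2$, $m=2$, which reads $ab+\tfrac14(a-b)^2\le 2^{-2/r}(a^r+b^r)^{2/r}$, i.e. $ab\le 2^{-2/r}(a^r+b^r)^{2/r}-\tfrac14(a-b)^2$. Then I apply Lemma \ref{lemma2.1} a second time, now at exponent $r\ge1$, giving $a^r\le\langle|T|^{2rs\alpha}x,x\rangle$ and $b^r\le\langle|T^*|^{2rs\beta}x,x\rangle$, so that $a^r+b^r\le\langle(|T|^{2rs\alpha}+|T^*|^{2rs\beta})x,x\rangle\le\||T|^{2rs\alpha}+|T^*|^{2rs\beta}\|$ for any unit vector $x$ (using $t\mapsto t^{2/r}$ nondecreasing). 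Substituting back produces, for every unit $x$,
\[
\left| {\left\langle {T\left| T \right|^{\alpha+\beta-1}x,x} \right\rangle } \right|^{2s} \le 2^{-2/r}\left\| {\left| T \right|^{2rs\alpha}+\left| {T^*} \right|^{2rs\beta}} \right\|^{2/r}-\frac14(a-b)^2.
\]

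Finally, since the first term on the right no longer depends on $x$, taking the supremum over all unit vectors and using the elementary identity $\sup_x\,[C-h(x)]=C-\inf_x h(x)$ yields the asserted bound, the subtracted infimum being that of $\bigl(\langle|T|^{2s\alpha}x,x\rangle-\langle|T^*|^{2s\beta}x,x\rangle\bigr)^2$. The one genuinely delicate point is the \emph{ordering} of the two Jensen steps: the power $s$ must be absorbed \emph{before} Young is applied, so that the refined Young inequality enters at the clean index $m=2$ and delivers the clean constants $2^{-2/r}$ and $\tfrac14$, whereas the power $r$ must be absorbed \emph{afterwards}, inside the resulting $(a^r+b^r)^{2/r}$; running them in the reverse order would instead reproduce Theorem \ref{thm2.4} at $m=2s$. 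I note that this route makes the squared difference in the correction term carry the exponents $2s\alpha,2s\beta$ rather than $2rs\alpha,2rs\beta$, since Jensen cannot be run in the decreasing direction on a subtracted term.
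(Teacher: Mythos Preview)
Your proof follows the paper's argument step for step: set $y=x$ in Furuta's inequality \eqref{eq1.5}, raise to the $s$-th power, apply Jensen (Lemma~\ref{lemma2.1}) to absorb $s$, invoke Lemma~\ref{lemma2.3} with $p=q=2$ and $m=2$, apply Jensen again to absorb $r$, and take the supremum. This is exactly the paper's route.

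Your closing observation is in fact sharper than the paper: the chain of inequalities just described forces the correction term to be the \emph{square} $\bigl(\langle|T|^{2s\alpha}x,x\rangle-\langle|T^*|^{2s\beta}x,x\rangle\bigr)^2$, with exponents $2s\alpha,\,2s\beta$ rather than $2rs\alpha,\,2rs\beta$. The paper's statement and proof display the unsquared difference with the extra factor $r$ in the exponents, but neither follows from Lemma~\ref{lemma2.3} as applied, and you are right that Jensen cannot be invoked on a term that is being \emph{subtracted}. So your version of the correction term is the one the argument actually delivers; the paper's printed form appears to be a typographical slip.
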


\begin{proof}
Let $y=x$ in \eqref{eq1.5}, by applying Lemma \ref{lemma2.3}  with $p=q=2$ and $m=2$,  we get	
\begin{align*}
& \left| {\left\langle {T\left| T \right|^{\alpha  + \beta  - 1}x,x} \right\rangle } \right|^{2s}
\\
&\le  \left\langle {\left| T \right|^{2\alpha}x,x} \right\rangle^s \left\langle {\left| T^* \right|^{2\beta} x,x} \right\rangle^s \qquad\qquad  \qquad  \text{($t^s$ increasing)}
\\ 
&\le \left\langle { \left| T \right|^{2s\alpha} x,x} \right\rangle \left\langle {  \left| {T^* } \right|^{2s\beta}  x,x} \right\rangle  \qquad\qquad  \qquad  \text{(by convexity of $t^s$)}
\\ 
&\le 2^{ - \frac{2}{r}} \left( {\left\langle {\left| T \right|^{2s\alpha} x,x} \right\rangle ^r  + \left\langle {\left| T^* \right|^{2s\beta} x,x} \right\rangle ^r } \right)^{\frac{2}{r}}   \qquad\text{(by Lemma \ref{lemma2.3})}
\\ 
&\qquad- \frac{1}{4}\left[ {\left\langle {\left| T \right|^{2sr\alpha}x,x} \right\rangle  - \left\langle {\left| T^* \right|^{2rs\beta}x,x} \right\rangle } \right] 
\\ 
&\le 2^{ - \frac{2}{r}} \left( {\left\langle {\left| T \right|^{2rs\alpha} x,x} \right\rangle    + \left\langle {\left| T^* \right|^{2rs\beta} x,x} \right\rangle   } \right)^{\frac{2}{r}}   \qquad\text{(by (2.1))}
\\ 
&\qquad-  \frac{1}{4}\left[ {\left\langle {\left| T \right|^{2sr\alpha}x,x} \right\rangle  - \left\langle {\left| T^* \right|^{2rs\beta}x,x} \right\rangle } \right].
\end{align*}
Taking the supremum over all unit vector $x\in \mathscr{H}$ we get the desiredd result.
\end{proof}

 \begin{corollary}
 \label{cor2.8}	Let $T\in \mathscr(B)\left(\mathscr{H}\right)$, $\alpha, \beta \ge 0$ such that $\alpha+ \beta \ge1$. Then
\begin{multline}
 w^{2s}\left( T\left| T \right|^{\alpha  + \beta  - 1} \right) 
 \\
  \le \frac{1}{4}\left\| { \left| T \right|^{2s\alpha}+\left| T^* \right|^{2s\beta}   } \right\|^{2}  - \frac{1}{4}\mathop {\inf }\limits_{\left\| x \right\| = 1} \left[ {\left\langle {\left| T \right|^{2s \alpha}x,x} \right\rangle  - \left\langle {\left| T^* \right|^{2 s\beta}x,x} \right\rangle } \right] 	\label{eq2.8}
 \end{multline}	
 for all $s\ge1$.
 \end{corollary}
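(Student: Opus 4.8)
The plan is to obtain \eqref{eq2.8} as a direct specialization of Theorem \ref{thm2.7}. The hypotheses of Corollary \ref{cor2.8} coincide with those of Theorem \ref{thm2.7}, and the admissible range there is $r,s\ge1$, so the value $r=1$ is permitted. I would therefore set $r=1$ in \eqref{eq2.7} and simplify the three places where $r$ occurs on the right-hand side, leaving the left-hand side $w^{2s}(T|T|^{\alpha+\beta-1})$ untouched.

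Concretely, I would first simplify the scalar prefactor: with $r=1$ we have $2^{-2/r}=2^{-2}=\frac14$, which produces the constant $\frac14$ in front of the norm term in \eqref{eq2.8}. Next I would reduce the exponent on the norm, so that $\|\cdot\|^{2/r}$ becomes $\|\cdot\|^{2}$. Finally I would collapse the operator powers appearing both inside the norm and inside the infimum: every occurrence of $2rs\alpha$ reduces to $2s\alpha$ and every occurrence of $2rs\beta$ reduces to $2s\beta$. After these substitutions the right-hand side of \eqref{eq2.7} is verbatim the right-hand side of \eqref{eq2.8}.

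The only point needing a moment of care is that the infimum bracket in \eqref{eq2.7} as printed carries the vector $y$; I would read this as the intended $x$, consistent with the $y=x$ substitution used throughout the proof of Theorem \ref{thm2.7}, so that the bracket becomes $\langle|T|^{2s\alpha}x,x\rangle-\langle|T^*|^{2s\beta}x,x\rangle$, matching \eqref{eq2.8}. Since no new inequality is invoked beyond \eqref{eq2.7}, there is no genuine analytic obstacle: the content lies entirely in Theorem \ref{thm2.7}, and Corollary \ref{cor2.8} is its $r=1$ instance. As a self-contained alternative, one may instead rerun the chain of inequalities in the proof of Theorem \ref{thm2.7} directly with $r=1$; in that case the monotonicity step invoking (2.1) becomes the identity $\langle Sx,x\rangle^{1}=\langle S^{1}x,x\rangle$, and the single application of Lemma \ref{lemma2.3} with $p=q=2$ and $m=2$ already yields \eqref{eq2.8}.
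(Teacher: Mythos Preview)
Your approach is exactly the paper's: the proof of Corollary \ref{cor2.8} consists of the single sentence ``Setting $r=1$ in \eqref{eq2.7}'', and your detailed walkthrough of how the constants, exponents, and powers collapse under $r=1$ matches this specialization precisely. Your remark about reading the stray $y$ in \eqref{eq2.7} as $x$ is also correct and consistent with the proof of Theorem \ref{thm2.7}.
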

\begin{proof}
Setting $r=1$ in \eqref{eq2.7}
\end{proof}
\begin{remark}
Setting $\alpha=\beta=\frac{1}{2}$ in \eqref{eq2.8} we get
\begin{align*}
w^{2s}\left( T  \right)  \le \frac{1}{4}\left\| { \left| T \right|^{s}+\left| T^* \right|^{s}   } \right\|^{2}   - \frac{1}{4}\mathop {\inf }\limits_{\left\| x \right\| = 1} \left[ {\left\langle {\left| T \right|^{s}x,x} \right\rangle  - \left\langle {\left| T^* \right|^{s}x,x} \right\rangle } \right] 	
\end{align*}	
for all $s\ge1$. In particular case, choose $s=1$ we get
\begin{align*}
w^{2}\left( T  \right)  \le \frac{1}{4}\left\| { \left| T \right| +\left| T^* \right|    } \right\|^{2}   - \frac{1}{4}\mathop {\inf }\limits_{\left\| x \right\| = 1} \left[ {\left\langle {\left| T \right| x,x} \right\rangle  - \left\langle {\left| T^* \right| x,x} \right\rangle } \right]. 	
\end{align*}	
\end{remark}

.\begin{remark}
Setting $\alpha=\beta=\frac{1}{s}$, $(s\ge1)$	
\begin{align}
w^{2s}\left( T\left| T \right|^{\frac{2}{s}  - 1} \right) 
\le \frac{1}{4}\left\| { \left| T \right|^{2 }+\left| T^* \right|^{2 }   } \right\|^{2}  - \frac{1}{4}\mathop {\inf }\limits_{\left\| x \right\| = 1} \left[ {\left\langle {\left| T \right|^{2 }x,x} \right\rangle  - \left\langle {\left| T^* \right|^{2 }x,x} \right\rangle } \right]. 	\label{eq2.9}
\end{align}	
In particular case, choose $s=1$ in \eqref{eq2.9} we get
\begin{align}
w^{2}\left( T\left| T \right| \right) 
\le \frac{1}{4}\left\| { \left| T \right|^{2 }+\left| T^* \right|^{2 }   } \right\|^{2}  - \frac{1}{4}\mathop {\inf }\limits_{\left\| x \right\| = 1} \left[ {\left\langle {\left| T \right|^{2 }x,x} \right\rangle  - \left\langle {\left| T^* \right|^{2 }x,x} \right\rangle } \right] 	\label{eq2.10}
\end{align}	
which can be written as
\begin{align*}
w^{2 }\left(T\left| T \right| \right)   \le \frac{1}{4}\left\|   T^*T +  TT^*\right\|^2 
- \frac{1}{4}\mathop {\inf }\limits_{\left\| x \right\| = 1} \left[ {\left\langle {\left| T \right|^{2 }x,x} \right\rangle  - \left\langle {\left| T^* \right|^{2 }x,x} \right\rangle } \right],   
\end{align*}
\end{remark}

\begin{remark}
Setting $\alpha=\beta=\frac{1}{2}$ , $s=1$, $r=2$ and 	
\begin{align*}
w^{2}\left( T  \right)  \le \frac{1}{2} \left\| { \left| T \right|^{2}+\left| T^* \right|^{2}   } \right\|    -  \frac{1}{4}\mathop {\inf }\limits_{\left\| x \right\| = 1} \left[ {\left\langle {\left| T \right|^{2}x,x} \right\rangle  - \left\langle {\left| T^* \right|^{2}x,x} \right\rangle } \right] 	
\end{align*}	
or we can write
\begin{align}
w^{2 }\left(T \right)   \le \frac{1}{2}\left\|   T^*T +  TT^*\right\|  
 -  \frac{1}{4}\mathop {\inf }\limits_{\left\| x \right\| = 1} \left[ {\left\langle {\left| T \right|^{2}x,x} \right\rangle  - \left\langle {\left| T^* \right|^{2}x,x} \right\rangle } \right] \label{eq2.11}	 
\end{align}
	and this refines the upper bound in the Kittaneh inequality \eqref{eq1.7}.
\end{remark}

\begin{theorem}
	\label{thm2.5}	Let $T\in \mathscr(B)\left(\mathscr{H}\right)$, $\alpha, \beta \ge 0$ such that $\alpha+ \beta \ge1$. Then
	\begin{align}
	\label{eq2.12} w^{2s}\left(T\left| T \right|^{\alpha  + \beta  - 1} \right)   &\le \left\|   \frac{1}{p}\left| T\right|^{2sp\alpha} +\frac{1}{q}\left| T^*\right|^{2sq\beta}\right\| 
	\\ 
	&\qquad- r_0 \mathop {\inf }\limits_{\left\| x \right\| = 1} 
	\left( {\left\langle {\left| T \right|^{2s\alpha}x,x} \right\rangle ^{\frac{p}{2}}  - \left\langle {\left| T^* \right|^{2s\beta}x,x} \right\rangle ^{\frac{q}{2}} } \right)^2 \nonumber  
	\end{align}
	for all $s\ge1$ and $p,q>1$ such that $\frac{1}{p}+\frac{1}{q}=1$, where $r_0:=\min\left\{\frac{1}{p},\frac{1}{q}\right\}$. 
	
	In particular case, we have
	\begin{align}
	\label{eq2.13}w^{2s}\left(T\left| T \right|^{\alpha  + \beta  - 1} \right)   &\le \frac{1}{2}\left\|   \left| T\right|^{4s\alpha} +\left| T^*\right|^{4s\beta}\right\| 
	\\ 
	&\qquad- \frac{1}{2} \mathop {\inf }\limits_{\left\| x \right\| = 1} 
	\left( {\left\langle {\left| T \right|^{2s\alpha}x,x} \right\rangle  - \left\langle {\left| T^* \right|^{2s\beta}x,x} \right\rangle   } \right)^2 \nonumber  
	\end{align}
\end{theorem}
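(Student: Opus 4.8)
The plan is to follow the same scheme as the proof of Theorem \ref{thm2.7}, but to replace the use of Lemma \ref{lemma2.3} (specialised to $p=q=2$) by the Kittaneh--Manasrah refinement of the Young inequality in Lemma \ref{lemma2.2} with general conjugate exponents $p,q$. Setting $y=x$ in the Furuta inequality \eqref{eq1.5} and raising both sides to the power $s$ (using that $t\mapsto t^{s}$ is increasing on $[0,\infty)$), I first obtain, for every unit vector $x$,
$$\left| \left\langle T\left| T\right|^{\alpha+\beta-1}x,x\right\rangle\right|^{2s} \le \left\langle \left| T\right|^{2\alpha}x,x\right\rangle^{s}\left\langle \left| T^{*}\right|^{2\beta}x,x\right\rangle^{s}.$$
Applying the operator Jensen inequality of Lemma \ref{lemma2.1} with $r=s\ge 1$ to each factor then yields $\langle|T|^{2\alpha}x,x\rangle^{s}\le\langle|T|^{2s\alpha}x,x\rangle$ and the analogous bound for $T^{*}$, so the right-hand side is dominated by the product $\langle|T|^{2s\alpha}x,x\rangle\,\langle|T^{*}|^{2s\beta}x,x\rangle$.

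Next I set $a=\langle|T|^{2s\alpha}x,x\rangle\ge0$ and $b=\langle|T^{*}|^{2s\beta}x,x\rangle\ge0$ and feed the product $ab$ into Lemma \ref{lemma2.2}, which gives
$$ab \le \frac{a^{p}}{p}+\frac{b^{q}}{q}-r_{0}\left(a^{p/2}-b^{q/2}\right)^{2},$$
with $r_{0}=\min\{1/p,1/q\}$. The refinement term $r_{0}(a^{p/2}-b^{q/2})^{2}$ is exactly the quantity appearing in \eqref{eq2.12}. To convert the two leading terms into a single quadratic form, I apply Lemma \ref{lemma2.1} once more, now with $r=p\ge1$ and $r=q\ge1$, to get $a^{p}\le\langle|T|^{2sp\alpha}x,x\rangle$ and $b^{q}\le\langle|T^{*}|^{2sq\beta}x,x\rangle$, whence
$$\frac{a^{p}}{p}+\frac{b^{q}}{q}\le\left\langle\left[\frac{1}{p}|T|^{2sp\alpha}+\frac{1}{q}|T^{*}|^{2sq\beta}\right]x,x\right\rangle.$$

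Combining these estimates, for every unit vector $x$ the quantity $|\langle T|T|^{\alpha+\beta-1}x,x\rangle|^{2s}$ is bounded above by $\langle Sx,x\rangle-r_{0}(a^{p/2}-b^{q/2})^{2}$, where $S=\frac{1}{p}|T|^{2sp\alpha}+\frac{1}{q}|T^{*}|^{2sq\beta}$ is a positive operator. Taking the supremum over all unit vectors and using the elementary fact $\sup(f-g)\le\sup f-\inf g$ together with $\sup_{\|x\|=1}\langle Sx,x\rangle=\|S\|$ (valid because $S\ge0$) produces precisely \eqref{eq2.12}. The special case \eqref{eq2.13} then follows by choosing $p=q=2$, for which $r_{0}=\frac{1}{2}$ and $p/2=q/2=1$. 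I do not anticipate a serious obstacle, since the argument is just a chain of monotonicity and convexity estimates; the only point requiring genuine care is the final supremum step, where one must keep the refinement term intact by splitting the supremum of a difference into a supremum minus an infimum, and must recognise that for the positive operator $S$ the supremum of its quadratic form over unit vectors coincides with its operator norm.
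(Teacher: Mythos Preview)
Your proposal is correct and follows essentially the same route as the paper's own proof: set $y=x$ in the Furuta inequality \eqref{eq1.5}, raise to the $s$-th power, apply the operator Jensen inequality (Lemma \ref{lemma2.1}) to pass from $\langle|T|^{2\alpha}x,x\rangle^{s}$ to $\langle|T|^{2s\alpha}x,x\rangle$, invoke the Kittaneh--Manasrah refinement (Lemma \ref{lemma2.2}) on the resulting product, apply Lemma \ref{lemma2.1} once more to absorb the exponents $p,q$ inside the quadratic form, and finally take the supremum over unit vectors. Your write-up is in fact slightly more explicit than the paper's about the final supremum step (using $\sup(f-g)\le\sup f-\inf g$ and $\sup_{\|x\|=1}\langle Sx,x\rangle=\|S\|$ for $S\ge0$), which is a welcome clarification.
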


\begin{proof}
	Let $s\ge1$ and setting $y=x$ in \eqref{eq1.5}, we get	
	\begin{align*}
	\left| {\left\langle {T\left| T \right|^{\alpha  + \beta  - 1}x,x} \right\rangle } \right|^{2s}
	&\le  \left\langle {\left| T \right|^{2\alpha}x,x} \right\rangle^s \left\langle {\left| T^* \right|^{2\beta} x,x} \right\rangle^s    \qquad\qquad \text{(by \eqref{eq1.5})} 
	\\ 
	&\le \left\langle {\left| T \right|^{2s\alpha}x,x} \right\rangle \left\langle {\left| T^* \right|^{2s \beta}  x,x} \right\rangle\qquad \qquad \text{(by convexity of $t^s$)}
	\\ 
	&\le \frac{1}{p}\left\langle {\left| T \right|^{2s\alpha} x,x} \right\rangle ^p  + \frac{1}{q}\left\langle {\left| T^* \right|^{2s\beta}x,x} \right\rangle ^q   \qquad \text{(by Lemma \ref{lemma2.2})}
	\\ 
	&\qquad- r_0 \left( {\left\langle {\left| T \right|^{2s\alpha}x,x} \right\rangle ^{\frac{p}{2}}  - \left\langle {\left| T^* \right|^{2s\beta}x,x} \right\rangle ^{\frac{q}{2}} } \right)^2 
	\\ 
	&\le \frac{1}{p}\left\langle {\left| T\right|^{2sp\alpha}x,x} \right\rangle  + \frac{1}{q}\left\langle { \left| T^* \right|^{2sq\beta}x,x} \right\rangle  \qquad\qquad \text{(by (2.1))}
	\\ 
	&\qquad- r_0 \left( {\left\langle {\left| T \right|^{2s\alpha}x,x} \right\rangle ^{\frac{p}{2}}  - \left\langle {\left| T^* \right|^{2s\beta}x,x} \right\rangle ^{\frac{q}{2}} } \right)^2  
	\end{align*}
	Taking the supremum over all univt vector $x\in \mathscr{H}$, we get the required result. The particular case follows by setting $p=q=2$.		
\end{proof}
Various interesting special cases could be deduced form \eqref{eq2.5}, in what follows, we give some of these cases in the consequence remarks.
\begin{remark}
	Setting $\alpha=\beta=\frac{1}{2}$ in \eqref{eq2.6}, then we have
	\begin{align*}
	w^{2s}\left(T \right)   \le \frac{1}{2}\left\|   \left| T\right|^{2s} +\left| T^*\right|^{2s }\right\| 
	- \frac{1}{2} \mathop {\inf }\limits_{\left\| x \right\| = 1} 
	\left( {\left\langle {\left| T \right|^{s}x,x} \right\rangle  - \left\langle {\left| T^* \right|^{s}x,x} \right\rangle   } \right)^2  \end{align*}
	for all $s\ge1$. In particular, for $s=1$ we get
	\begin{align*}
	w^{2}\left(T \right)    \le \frac{1}{2}\left\|   \left| T\right|^{2} +\left| T^*\right|^{2}\right\| 
	- \frac{1}{2} \mathop {\inf }\limits_{\left\| x \right\| = 1} 
	\left( {\left\langle {\left| T \right|x,x} \right\rangle  - \left\langle {\left| T^* \right|x,x} \right\rangle   } \right)^2,  
	\end{align*}
	which can be written as
	\begin{align}
	w^{2 }\left(T \right)   \le \frac{1}{2}\left\|   T^*T +  TT^*\right\| 
	- \frac{1}{2} \mathop {\inf }\limits_{\left\| x \right\| = 1} 
	\left( {\left\langle {\left| T \right|x,x} \right\rangle  - \left\langle {\left| T^* \right|x,x} \right\rangle   } \right)^2. \label{eq2.14}
	\end{align}
	and this refines the upper bound of the refinement of Kittaneh inequality \eqref{eq2.11}. Clearly, \eqref{eq2.14} is better than \eqref{eq2.11} which in turn bettern that \eqref{eq1.7}.
\end{remark}

\begin{remark}
	Setting $\alpha=\beta=1$ in \eqref{eq2.12}, then we have
	\begin{align*}
	w^{2s}\left(T\left| T \right| \right)   &\le \left\|   \frac{1}{p}\left| T\right|^{2sp} +\frac{1}{q}\left| T^*\right|^{2sq}\right\| 
	\\ 
	&\qquad- r_0 \mathop {\inf }\limits_{\left\| x \right\| = 1} 
	\left( {\left\langle {\left| T \right|^{2s}x,x} \right\rangle ^{\frac{p}{2}}  - \left\langle {\left| T^* \right|^{2s}x,x} \right\rangle ^{\frac{q}{2}} } \right)^2 \nonumber  
	\end{align*}
	for all $s\ge1$ and   $p,q>1$ such that $\frac{1}{p}+\frac{1}{q}=1$, where $r_0:=\min\left\{\frac{1}{p},\frac{1}{q}\right\}$. 
	
	In particular case, choose $s=1$ and $p=q=2$  in the previous inequality, we get
	\begin{align*}
	w^{2}\left(T\left| T \right| \right)   &\le \frac{1}{2}\left\|   \left| T\right|^{4} + \left| T^*\right|^{4}\right\| 
	- \frac{1}{2} \mathop {\inf }\limits_{\left\| x \right\| = 1} 
	\left( {\left\langle {\left| T \right|^{2}x,x} \right\rangle   - \left\langle {\left| T^* \right|^{2}x,x} \right\rangle   } \right)^2. 
	\end{align*}
\end{remark}

 Numerical radius inequality of special type of Hilbert space operators
 for commutators can be established as follows:
\begin{theorem}
\label{thm2.15}	Let $T,S\in \mathscr(B)\left(\mathscr{H}\right)$, $\alpha, \beta,\gamma, \delta \ge 0$ such that $\alpha+ \beta \ge1$ and $\gamma+ \delta \ge1$. Then
\begin{align}
\label{eq2.15}&w\left(T\left| T \right|^{\alpha  + \beta  - 1}+S\left| S \right|^{\gamma+\delta  - 1}\right)
\\
&\le  2^{-\frac{1}{r}} \left\|\left| T \right|^{2r\alpha}+\left| T^* \right|^{2r\beta}\right\|^{\frac{1}{r}}+2^{-\frac{1}{r}} \left\|\left| S \right|^{2r\gamma}+\left|S^* \right|^{2r\delta}\right\|^{\frac{1}{r}}
 \nonumber\\
&\qquad-\frac{1}{2} 
\mathop {\inf }\limits_{\left\| x \right\| = 1} \left(\left\langle {\left| T \right|^{2\alpha}x,x} \right\rangle^{\frac{1}{2}} - \left\langle {\left| T^* \right|^{2\beta} x,x} \right\rangle^{\frac{1}{2}}\right)^2
 \nonumber\\
&\qquad-\frac{1}{2}\mathop {\inf }\limits_{\left\| x \right\| = 1} 
\left(\left\langle {\left| S \right|^{2\gamma}x,x} \right\rangle^{\frac{1}{2}} - \left\langle {\left| S^* \right|^{2\delta} x,x} \right\rangle^{\frac{1}{2}}\right)^2  \nonumber
\end{align}
for all $r\ge1$.
\end{theorem}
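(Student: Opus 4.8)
The plan is to reduce the statement to the single-operator estimate of Theorem \ref{thm2.4} by exploiting the subadditivity of the numerical radius. First I would set $A=T\left|T\right|^{\alpha+\beta-1}$ and $B=S\left|S\right|^{\gamma+\delta-1}$ and invoke the triangle inequality $w(A+B)\le w(A)+w(B)$, which is available because $w(\cdot)$ is a norm on $\mathscr{B}(\mathscr{H})$. This immediately splits the left-hand side as
\[
w\!\left(T\left|T\right|^{\alpha+\beta-1}+S\left|S\right|^{\gamma+\delta-1}\right)\le w\!\left(T\left|T\right|^{\alpha+\beta-1}\right)+w\!\left(S\left|S\right|^{\gamma+\delta-1}\right),
\]
and it is precisely this step that forces the two \emph{separate} infimum terms to appear on the right of \eqref{eq2.15}, rather than a single coupled infimum over a common unit vector.

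Next I would apply Theorem \ref{thm2.4} in the special case $m=1$ to each summand independently. For the first term, with the given exponents $\alpha,\beta\ge0$, $\alpha+\beta\ge1$, and any $r\ge1$, inequality \eqref{eq2.5} with $m=1$ reads
\[
w\!\left(T\left|T\right|^{\alpha+\beta-1}\right)\le 2^{-\frac1r}\left\|\,\left|T\right|^{2r\alpha}+\left|T^*\right|^{2r\beta}\right\|^{\frac1r}-\tfrac12\inf_{\|x\|=1}\left(\left\langle\left|T\right|^{2\alpha}x,x\right\rangle^{\frac12}-\left\langle\left|T^*\right|^{2\beta}x,x\right\rangle^{\frac12}\right)^2,
\]
since $\tfrac{1}{2^m}=\tfrac12$ and the exponent $\tfrac{m}{2}$ collapses to $\tfrac12$ when $m=1$. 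The identical estimate applied to $S$ with the pair $\gamma,\delta$ (and the same $r$) yields the corresponding bound in terms of $\left|S\right|$ and $\left|S^*\right|$.

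Finally I would add the two bounds. Each infimum correction enters with a minus sign, yet each of the two estimates is a genuine upper bound for its numerical radius; hence summing them and chaining with the triangle inequality preserves the direction and produces exactly \eqref{eq2.15}. There is no substantial obstacle: the analytic content is entirely carried by Theorem \ref{thm2.4}, and the single point deserving a word of care is the legitimacy of replacing $w(A)+w(B)$ by the sum of the two displayed upper bounds, which is valid because each bound dominates its respective numerical radius even after the negative infimum terms are subtracted off.
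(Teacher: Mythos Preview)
Your proof is correct and reaches the stated inequality. The difference from the paper's argument is purely organizational: the paper applies the triangle inequality \emph{pointwise} at the inner-product level,
\[
\bigl|\langle (A+B)x,x\rangle\bigr|\le \bigl|\langle Ax,x\rangle\bigr|+\bigl|\langle Bx,x\rangle\bigr|,
\]
and then redoes the Furuta inequality \eqref{eq1.5}, the refined Young inequality (Lemma~\ref{lemma2.3} with $m=1$, $p=q=2$), and operator Jensen (2.1) inline for each summand before taking the supremum over unit vectors at the very end. You instead take the supremum first, use the subadditivity $w(A+B)\le w(A)+w(B)$, and invoke Theorem~\ref{thm2.4} with $m=1$ as a black box for each term. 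Because the paper ultimately decouples the supremum and the two infima anyway, both routes land on exactly the same bound; your version is simply more modular, while the paper's pointwise version is self-contained and would in principle allow one to retain a single coupled infimum over a common $x$ (a sharpening the paper does not pursue).
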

 
\begin{proof}
Employing the triangle inequality, we have
\begin{align*}
&\left| {\left\langle {\left(T\left| T \right|^{\alpha  + \beta  - 1}+S\left| S \right|^{\gamma+\delta  - 1}\right)x,x} \right\rangle } \right|
\\
&\le  \left| {\left\langle { T\left| T \right|^{\alpha  + \beta  - 1} x,x} \right\rangle } \right|+\left| {\left\langle {S\left| S \right|^{\gamma+\delta  - 1}x,x} \right\rangle } \right|
\\
&\le  \left\langle { \left| T \right|^{2\alpha}x,x} \right\rangle ^{\frac{1}{2}} \left\langle {\left| T^* \right|^{2\beta} x,x} \right\rangle ^{\frac{1}{2}}  
 + \left\langle {\left| S \right|^{2\gamma}x,x} \right\rangle ^{\frac{1}{2}} \left\langle {\left| S^* \right|^{2\delta} x,x} \right\rangle ^{\frac{1}{2}} \qquad (\text{by  \eqref{eq1.5}}) 
\\ 
&\le 2^{-\frac{1}{r}} \left(\left\langle {\left| T  \right|^{2\alpha}x,x} \right\rangle^r + \left\langle {\left| T^* \right|^{2\beta} x,x} \right\rangle^r \right)^{\frac{1}{r}}\qquad \qquad \qquad\qquad(\text{by Lemma \ref{lemma2.3}})
\\
&\qquad\qquad-\frac{1}{2} \left(\left\langle {\left| T \right|^{2\alpha}x,x} \right\rangle^{\frac{1}{2}} - \left\langle {\left| T^* \right|^{2\beta} x,x} \right\rangle^{\frac{1}{2}}\right)^2 
\\
&\qquad + 2^{-\frac{1}{r}} \left(\left\langle {\left| S \right|^{2\gamma}x,x} \right\rangle^r + \left\langle {\left| S^* \right|^{2\delta} x,x} \right\rangle^r \right)^{\frac{1}{r}}
\\
&\qquad\qquad-\frac{1}{2} \left(\left\langle {\left| S \right|^{2\gamma}x,x} \right\rangle^{\frac{1}{2}} - \left\langle {\left| S^* \right|^{2\delta}x,x} \right\rangle^{\frac{1}{2}}\right)^2 
\\ 
&\le 2^{-\frac{1}{r}} \left(\left\langle {\left| T \right|^{2r\alpha}x,x} \right\rangle  + \left\langle {\left| T^* \right|^{2r\beta}x,x}\right\rangle \right)^{\frac{1}{r}} \qquad\qquad\qquad\qquad (\text{by (2.1)})
\\
&\qquad\qquad-\frac{1}{2} \left(\left\langle {\left| T \right|^{2\alpha}x,x} \right\rangle^{\frac{1}{2}} - \left\langle {\left| T ^*\right|^{2\beta} x,x} \right\rangle^{\frac{1}{2}}\right)^2
\\
&\qquad + 2^{-\frac{1}{r}} \left(\left\langle {\left| S \right|^{2r\gamma}x,x} \right\rangle  + \left\langle {\left| S^* \right|^{2r\delta}x,x}\right\rangle  \right)^{\frac{1}{r}}
\\
&\qquad-\frac{1}{2} \left(\left\langle {\left| S \right|^{2\gamma}x,x} \right\rangle^{\frac{1}{2}} - \left\langle {\left| S ^*\right|^{2\delta} x,x} \right\rangle^{\frac{1}{2}}\right)^2. 
\end{align*}
Taking the supremum over all unit vector $x\in \mathscr{H}$ we get the desiredd result.
\end{proof}

\begin{corollary}
	Let $T,S\in \mathscr(B)\left(\mathscr{H}\right)$, $\alpha, \beta,\gamma, \delta \ge 0$ such that $\alpha+ \beta \ge1$ and $\gamma+ \delta \ge1$. Then
\begin{align}
\label{eq2.16} w\left(T\left| T \right|^{\alpha  + \beta  - 1}+S\left| S \right|^{\gamma+\delta  - 1}\right)
 &\le  \frac{1}{2} \left\|\left| T \right|^{2 \alpha}+\left| T^* \right|^{2 \beta}  + \left| S \right|^{2 \gamma}+\left|S^* \right|^{2 \delta}\right\| 
 \\
&\qquad-\frac{1}{2} 
\mathop {\inf }\limits_{\left\| x \right\| = 1} \left(\left\langle {\left| T \right|^{2\alpha}x,x} \right\rangle^{\frac{1}{2}} - \left\langle {\left| T ^*\right|^{2\beta} x,x} \right\rangle^{\frac{1}{2}}\right)^2
\nonumber\\
&\qquad-\frac{1}{2}\mathop {\inf }\limits_{\left\| x \right\| = 1} 
\left(\left\langle {\left| S \right|^{2\gamma}x,x} \right\rangle^{\frac{1}{2}} - \left\langle {\left| S^* \right|^{2\delta} x,x} \right\rangle^{\frac{1}{2}}\right)^2.  \nonumber
\end{align}
\end{corollary}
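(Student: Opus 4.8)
The plan is to revisit the proof of Theorem \ref{thm2.15} rather than simply substitute $r=1$ into \eqref{eq2.15}. Specializing \eqref{eq2.15} to $r=1$ yields the \emph{sum} of two operator norms, $\frac{1}{2}\||T|^{2\alpha}+|T^*|^{2\beta}\|+\frac{1}{2}\||S|^{2\gamma}+|S^*|^{2\delta}\|$, whereas \eqref{eq2.16} claims the generally smaller quantity $\frac{1}{2}\||T|^{2\alpha}+|T^*|^{2\beta}+|S|^{2\gamma}+|S^*|^{2\delta}\|$. Since subadditivity of the operator norm runs in the wrong direction here, \eqref{eq2.16} is in fact \emph{sharper} than the naive $r=1$ instance, and I would therefore re-enter the chain of estimates in the proof of Theorem \ref{thm2.15} one step before the supremum is taken.

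First I would reproduce that chain with $r=1$: beginning with the triangle inequality, then applying the Furuta inequality \eqref{eq1.5} to each of the two summands, and finally Lemma \ref{lemma2.3} with $p=q=2$ and $m=1$ (the operator Jensen step of Lemma \ref{lemma2.1} being an equality at $r=1$). This produces, for every unit vector $x$,
\begin{align*}
\left|\left\langle\left(T|T|^{\alpha+\beta-1}+S|S|^{\gamma+\delta-1}\right)x,x\right\rangle\right|
&\le \frac{1}{2}\left(\langle|T|^{2\alpha}x,x\rangle+\langle|T^*|^{2\beta}x,x\rangle\right)\\
&\quad+\frac{1}{2}\left(\langle|S|^{2\gamma}x,x\rangle+\langle|S^*|^{2\delta}x,x\rangle\right)\\
&\quad-\frac{1}{2}\left(\langle|T|^{2\alpha}x,x\rangle^{\frac12}-\langle|T^*|^{2\beta}x,x\rangle^{\frac12}\right)^2\\
&\quad-\frac{1}{2}\left(\langle|S|^{2\gamma}x,x\rangle^{\frac12}-\langle|S^*|^{2\delta}x,x\rangle^{\frac12}\right)^2.
\end{align*}

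The crucial step, and the source of the sharper norm, is to fuse the four leading inner products by linearity into a single quadratic form, $\frac{1}{2}\langle Bx,x\rangle$ with $B:=|T|^{2\alpha}+|T^*|^{2\beta}+|S|^{2\gamma}+|S^*|^{2\delta}$, \emph{before} passing to a norm. Each of the four summands of $B$ is positive, so $B$ is positive and hence $\langle Bx,x\rangle\le\|B\|$ for every unit vector $x$; this bounds the leading term by exactly the combined norm in \eqref{eq2.16}.

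Finally I would take the supremum over all unit vectors $x$. As $\frac{1}{2}\|B\|$ does not depend on $x$, and since $\sup_x(-g_1-g_2)=-\inf_x(g_1+g_2)\le-\inf_x g_1-\inf_x g_2$, the two negative squared terms split into the two separate infima displayed in \eqref{eq2.16}, which completes the argument. I do not expect a real obstacle; the only point demanding care is the order of operations — the inner products must be combined and estimated by the norm of the single positive operator $B$ prior to taking the supremum, and it is precisely this reordering that yields \eqref{eq2.16} in place of the weaker $r=1$ specialization of Theorem \ref{thm2.15}.
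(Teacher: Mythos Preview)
Your proposal is correct and matches the paper's own proof, which likewise instructs one to set $r=1$ in the \emph{proof} of Theorem \ref{thm2.15} (not in its statement) and then take the supremum. Your explicit remark that the four inner products must be fused into $\langle Bx,x\rangle$ before passing to the norm is exactly the point the paper is tacitly using, and your treatment of the infimum splitting is a welcome clarification of a step the paper leaves implicit.
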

\begin{proof}
Seeting $r=1$ in the proof of Theorem \ref{thm2.15}, and then take the supremum over all unit vector $x\in \mathscr{H}$ we get the desired result. 
\end{proof}

\begin{remark}
Setting $\alpha=\beta=\gamma=\delta=\frac{1}{2}$ in \eqref{eq2.16}, we get
	\begin{align*}
	w\left(T +S \right)
	&\le  \frac{1}{2} \left\|\left| T \right| +\left| T^* \right| + \left| S \right|+\left|S^* \right| \right\| 
 -\frac{1}{2} 
	\mathop {\inf }\limits_{\left\| x \right\| = 1} \left(\left\langle {\left| T \right| x,x} \right\rangle^{\frac{1}{2}} - \left\langle {\left| T^* \right|  x,x} \right\rangle^{\frac{1}{2}}\right)^2
	\nonumber\\
	&\qquad-\frac{1}{2}\mathop {\inf }\limits_{\left\| x \right\| = 1} 
	\left(\left\langle {\left| S \right| x,x} \right\rangle^{\frac{1}{2}} - \left\langle {\left| S^* \right|  x,x} \right\rangle^{\frac{1}{2}}\right)^2  \nonumber
	\end{align*}
In particular, take $S=T$ we get 
	\begin{align*}
w\left(T  \right)
\le  \frac{1}{2} \left\|\left| T \right| +\left| T^* \right| +   \right\| 
-\frac{1}{2} 
\mathop {\inf }\limits_{\left\| x \right\| = 1} \left(\left\langle {\left| T \right| x,x} \right\rangle^{\frac{1}{2}} - \left\langle {\left| T^* \right|  x,x} \right\rangle^{\frac{1}{2}}\right)^2
\end{align*}
\end{remark}

\begin{remark}
	Setting $\alpha=\beta=\gamma=\delta=1$ in \eqref{eq2.16}, we get
	\begin{align*}
	w\left(T\left| T \right| +S\left| S \right| \right)
	&\le  \frac{1}{2} \left\|\left| T \right|^{2 }+\left| T^* \right|^{2  }  + \left| S \right|^{2  }+\left|S^* \right|^{2 }\right\| 
	\\
	&\qquad-\frac{1}{2} 
	\mathop {\inf }\limits_{\left\| x \right\| = 1} \left(\left\langle {\left| T \right|^{2 }x,x} \right\rangle^{\frac{1}{2}} - \left\langle {\left| T^* \right|^{2 } x,x} \right\rangle^{\frac{1}{2}}\right)^2
	\nonumber\\
	&\qquad-\frac{1}{2}\mathop {\inf }\limits_{\left\| x \right\| = 1} 
	\left(\left\langle {\left| S \right|^{2 }x,x} \right\rangle^{\frac{1}{2}} - \left\langle {\left| S ^*\right|^{2 } x,x} \right\rangle^{\frac{1}{2}}\right)^2  \nonumber
	\end{align*}
In particular, take $S=T$, we get	
	\begin{align*}
w\left(T\left| T \right| \right)
&\le  \frac{1}{2} \left\|\left| T \right|^{2 }+\left| T^* \right|^{2  }  \right\|  -\frac{1}{2} 
\mathop {\inf }\limits_{\left\| x \right\| = 1} \left(\left\langle {\left| T \right|^{2 }x,x} \right\rangle^{\frac{1}{2}} - \left\langle {\left| T^* \right|^{2 } x,x} \right\rangle^{\frac{1}{2}}\right)^2
\\
&=\frac{1}{2} \left\|T^*T+TT^*  \right\|  -\frac{1}{2} 
\mathop {\inf }\limits_{\left\| x \right\| = 1} \left(\left\langle {\left| T \right|^{2 }x,x} \right\rangle^{\frac{1}{2}} - \left\langle {\left| T^* \right|^{2 } x,x} \right\rangle^{\frac{1}{2}}\right)^2
 \nonumber
\end{align*}
\end{remark}


\bibliographystyle{amsplain}

\end{document}